\newtheorem{lemma}{Lemma}[section]
\newtheorem{theorem}[lemma]{Theorem}
\newtheorem{cor}[lemma]{Corollary}
\newtheorem{prop}[lemma]{Proposition}
\newtheorem{defi}[lemma]{Definition}
\DeclareMathOperator\dom{dom}
\DeclareMathOperator\ima{im}
\DeclareMathOperator\Ker{ker}
\newcommand{\ran}{\rangle}
\newcommand{\lam}{\lambda}
\newcommand{\ta}{\mbox{\tiny $A$}}
\newcommand{\tb}{\mbox{\tiny $B$}}
\newcommand{\tab}{\mbox{\tiny $A\!\cap\! B$}}
\newcommand{\taa}{\mbox{\tiny $A\!\cap\! A$}}
\newcommand{\tba}{\mbox{\tiny $B\!\cap\! A$}}
\newcommand{\tem}{\mbox{\tiny $\emptyset$}}
\newcommand{\ea}{e_{\!\ta}}
\newcommand{\eb}{e_{\!\tb}}
\newcommand{\eab}{e_{\!\tab}}
\newcommand{\eaa}{e_{\!\taa}}
\newcommand{\eba}{e_{\!\tba}}
\newcommand{\ma}{\mu_{\!\ta}}
\newcommand{\mb}{\mu_{\!\tb}}
\newcommand{\mab}{\mu_{\!\tab}}
\newcommand{\mem}{\mu_{\tem}}
\newcommand{\gs}{g^*}
\newcommand{\hs}{h^*}
\title{The Largest Subsemilattices of the Semigroup\\ of Transformations on a Finite Set}
\author{Jo\~ao Ara\'ujo\\
  {\small Universidade Aberta, R. Escola Polit\'{e}cnica, 147}\\
  {\small 1269-001 Lisboa, Portugal}\\{\footnotesize \&}\\
  {\small Centro de \'{A}lgebra, Universidade de Lisboa}\\
  {\small 1649-003 Lisboa, Portugal, jaraujo@ptmat.fc.ul.pt}
\and Janusz Konieczny\\
{\small Department of Mathematics, University of Mary Washington}\\
{\small Fredericksburg, Virginia 22401, USA, jkoniecz@umw.edu}}
\date{}
\begin{document}
\maketitle

\begin{abstract}
Let $T(X)$ be the semigroup of full transformations on a finite set $X$ with $n$ elements.
We prove that every subsemilattice of $T(X)$ has at most $2^{n-1}$ elements
and that there are precisely $n$ subsemilattices of size exactly $2^{n-1}$,
each isomorphic to the semilattice of idempotents of the symmetric inverse semigroup
on a set with $n-1$ elements.

\vskip 2mm

\noindent\emph{$2010$ Mathematics Subject Classification\/}. 20M20, 06A12.

\end{abstract}

\section{Introduction}
\setcounter{equation}{0}
A \emph{semilattice} is a commutative semigroup consisting entirely of idempotents. That is, a semigroup
$S$ is a semilattice if and only if for all $a,b\in S$, $aa=a$ and $ab=ba$. A semilattice
can also be defined as a partially ordered set $(S,\leq)$ such that the greatest lower
bound $a\wedge b$ exists for all $a,b\in S$. Indeed, if $S$ is a semilattice, then $(S,\leq)$, where
$\leq$ is a relation on $S$ defined by $a\leq b$ if $a=ab$, is a poset with $a\wedge b=ab$
for all $a,b\in S$. Conversely, if $(S,\leq)$ is a poset such that $a\wedge b$ exists for all $a,b\in S$,
then $S$ with multiplication $ab=a\wedge b$ is a semilattice. (See \cite[page~10]{Bi70} and \cite[Proposition~1.3.2]{Ho95}.)

The class of semilattices
forms a variety of algebras. In the lattice of the varieties of bands (idempotent semigroups),
which has been described by Birjukov \cite{Bi70}, Fennemore \cite{Fe71}, and
Gerhard \cite{Ge70}, the variety of semilattices appears just above the trivial variety (as one of the three atoms)
and below the varieties of left and right normal bands (see \cite[Figure~1]{ArKo07}).
If $S$ is an inverse semigroup (for every $a\in S$, there is a unique $a^{-1}\in S$
such that $a=aa^{-1}a$ and $a^{-1}=a^{-1}aa^{-1}$), then the set $E(S)$ of idempotents
of $S$ is a semilattice. Therefore, semilattices play important role in the theory of
inverse semigroups (see  \cite[Chapter~5]{Ho95}). They also appear in structure theorems
for other classes of semigroups, for example for completely regular semigroups \cite[Theorem~4.1.3]{Ho95}.
In the theory of partially ordered sets, semilattices provide the most important generalization
of lattices \cite[page~22]{Bi70}.

For a set $X$, we denote by $T(X)$ the semigroup of full transformations on $X$ (functions from $X$ to $X$),
and by $I(X)$ the symmetric inverse semigroup of partial one-to-one transformations on $X$ (one-to-one
functions whose domain and image are included in $X$). In both cases, the operation is the
composition of functions. (In this paper, we will write functions on the right: $xf$ rather than $f(x)$,
and compose from left to right: $x(fg)=(xf)g$ rather than $(fg)(x)=f(g(x))$.) For a semigroup $S$,
denote by $E(S)$ the set of idempotents of $S$. The set $E(I(X))$ is a semilattice, consisting
of all $e\in I(X)$ such that $\dom(e)=\ima(e)$ and $xe=x$ for all $x\in\dom(e)$.
The semilattice $E(I(X))$, viewed as a poset, is isomorphic to the poset $(\mathcal{P}(X),\subseteq)$
of the power set $\mathcal{P}(X)$ under inclusion. On the other hand, the set $E(T(X))$ is not
a semigroup if $|X|\geq3$ (if $|X|=2$, then $E(T(X))$ is a semigroup but not a semilattice). It
is therefore of interest to determine the subsets of $E(T(X))$ that are semilattices.

In 1991, Kunze and Crvenkovi\'c \cite{KuCr91} gave a criterion for a subsemilattice
of $T(X)$ to be maximal, for a finite set $X$, in terms of transitivity orders on $X$ introduced in \cite{KuCr87}.
Any subsemilattice $S$ of $T(X)$ induces a partial order $\leq$ on $X$ (transitivity order):
$x\leq y$ if $x=y$ or $x=ye$ for some $e\in S$.  Kunze and Crvenkovi\'c have proved \cite{KuCr91}
that a subsemilattice $S$ of $T(X)$ is a maximal subsemilattice if and only if the poset $(X\leq)$
satisfies certain conditions.

All these investigations have been prompted by the fact that any finite
full transformation semigroup $T(X)$ is covered by its inverse subsemigroups
\cite{schein1} (also see \cite[Theorem~6.2.4]{higgins}),
and by the still open problem, posed in \cite{schein2}, of describing the maximal inverse subsemigroups of $T(X)$. 
We note that if $X$ is infinite, then $T(X)$ is not covered by its inverse subsemigroups \cite[Exercise~6.2.8]{higgins}.

The purpose of this paper is to determine the largest subsemilattices of $T(X)$,
where $X$ is a finite set with $n$ elements. We prove that for every subsemilattice $S$ of $T(X)$,
$|S|\leq2^{n-1}$. Moreover, we exhibit the set of all subsemilattices of $T(X)$ with the maximum
cardinality of $2^{n-1}$. This set consists of $n$ semilattices, each induced by one element
of $X$ and isomorphic to the semilattice $E(I(X'))$ of idempotents of the symmetric
inverse semigroup $I(X')$, where $X'$ is a set with $n-1$ elements.

In Section~\ref{sitx}, we describe the idempotents of $T(X)$ and state some results about
commuting idempotents. In Section~\ref{set}, we define a collection $\{E_t\}_{t\in X}$ of $n$ maximal subsemilattices
of $T(X)$ with $|E_t|=2^{n-1}$ for every $t\in X$.
In the remainder of the paper, we prove that $2^{n-1}$ is the maximum cardinality that a subsemilattice
of $T(X)$ can have, and that the semilattices $E_t$ are the only subsemilattices of $T(X)$ that have
this maximum cardinality. Our argument is by induction on $n$. The crucial step is a construction of a semilattice
$S^*$ (given a subsemilattice $S$ of $T(X)$) such that $|S|\leq 2|S^*|$ and $S^*$ can be embedded in $T(X')$,
where $|X'|=n-1$. This construction is presented in Section~\ref{sinc}.
Finally, in Section~\ref{smst}, we state and prove our main results.

Throughout this paper, we fix a finite set $X$ and reserve $n$ to denote the cardinality of $X$.
To simplify the language, we will say ``semilattice in $T(X)$'' to mean ``subsemilattice of $T(X)$.''

\section{Idempotents in $T(X)$}\label{sitx}
\setcounter{equation}{0}
For $a\in T(X)$, we denote by $\ima(a)$ the image of $a$ and by $\Ker(a)=\{(x,y)\in X\times X:xa=ya\}$ the kernel of $a$.
Let $e\in T(X)$ be an idempotent with the image $\{x_1,\ldots,x_k\}$. For each $i\in\{1,\ldots,k\}$,
let $A_i=x_ie^{-1}=\{y\in X:ye=x_i\}$ and note that $A_ie=\{x_i\}$ and $x_i\in A_i$.
The collection $\{A_1,\ldots,A_k\}$ is the partition of $X$ induced by the kernel of $e$.
We will use the following notation for $e$:
\begin{equation}\label{eide}
e=(A_1,x_1\ran(A_2,x_2\ran\ldots(A_k,x_k\ran.
\end{equation}

The following result has been obtained in \cite{ArKo03} and \cite{Ko02}.

\begin{lemma}\label{lcen}
Let $e=(A_1,x_1\ran(A_2,x_2\ran\ldots(A_k,x_k\ran$ be an idempotent in $T(X)$ and let $a\in T(X)$. Then $a$ commutes with $e$
if and only if for every $i\in\{1,\ldots,k\}$, there is $j\in\{1,\ldots,k\}$ such that
$x_ia=x_j$ and $A_ia\subseteq A_j$.
\end{lemma}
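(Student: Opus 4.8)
The goal is to prove Lemma \ref{lcen}: $a$ commutes with the idempotent $e=(A_1,x_1\ran\ldots(A_k,x_k\ran$ iff for every $i$ there is $j$ with $x_ia=x_j$ and $A_ia\subseteq A_j$. The natural approach is a direct computation of the two composite transformations $ea$ and $ae$ on an arbitrary point $y\in X$, using the explicit description of $e$: if $y\in A_i$ then $ye=x_i$, and $e$ fixes each $x_i$.

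First I would compute $y(ea)$ for $y\in A_i$: since $ye=x_i$, we get $y(ea)=x_ia$. Next I would compute $y(ae)$: if $ya\in A_j$ then $y(ae)=(ya)e=x_j$, so the value of $ae$ on the block $A_i$ depends on which blocks the elements of $A_i$ land in under $a$. Thus $ea=ae$ holds iff for every $i$ and every $y\in A_i$, the block $A_j$ containing $ya$ satisfies $x_j=x_ia$.

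For the forward direction, fix $i$. Applying the equality $ea=ae$ at the point $x_i\in A_i$: $x_i(ea)=x_ia$ while $x_i(ae)=(x_ia)e$; so $x_ia=(x_ia)e$, which forces $x_ia$ to be a fixed point of $e$, i.e.\ $x_ia=x_j$ for some $j\in\{1,\ldots,k\}$. Now for an arbitrary $y\in A_i$, say $ya\in A_m$: then $y(ae)=x_m$ and $y(ea)=x_ia=x_j$, so $x_m=x_j$, hence $m=j$ (the $x$'s are distinct), giving $ya\in A_j$. Since $y$ was arbitrary, $A_ia\subseteq A_j$. For the converse, assume the stated condition. Given $y\in A_i$, pick the promised $j$; then $ya\in A_ia\subseteq A_j$, so $y(ae)=(ya)e=x_j=x_ia=y(ea)$. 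Since this holds for every $y$, $ae=ea$.

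This lemma is essentially a routine unwinding of the definitions, so I do not anticipate a genuine obstacle; the only point requiring a little care is the bookkeeping that the indices $x_j$ are pairwise distinct (so that $x_m = x_j$ really does imply the blocks coincide) and that evaluating $ea=ae$ at the single representative point $x_i$ already pins down $x_ia$, after which the block-containment condition follows by evaluating at the remaining points of $A_i$. Since the result is cited from \cite{ArKo03} and \cite{Ko02}, one could alternatively just quote it, but the self-contained two-direction argument above is short enough to include.
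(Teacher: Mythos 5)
Your proof is correct and complete: the forward direction correctly pins down $x_ia=(x_ia)e\in\fix(e)=\{x_1,\ldots,x_k\}$ by evaluating at the representative $x_i\in A_i$, and then gets $A_ia\subseteq A_j$ from the remaining points of $A_i$ using the distinctness of the $x_j$'s, while the converse is the routine pointwise check over the partition $\{A_1,\ldots,A_k\}$. The paper itself gives no proof of this lemma (it is quoted from the cited references), and your direct verification is exactly the standard argument one would expect there.
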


\begin{cor}\label{ccen}
Let $e,f$ be idempotents in $T(X)$ such that $ef=fe$.
Then, for all $x,y\in X$, if $x\in\ima(e)$ and $yf\in xe^{-1}$, then $xf=x$.
\end{cor}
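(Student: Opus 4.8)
The plan is to unpack the hypotheses using the normal form \eqnref{eide} for $e$ and then apply Lemma~\ref{lcen} with $a=f$, exploiting that $\{A_1,\ldots,A_k\}$ is a partition together with the fact that $f$ is an idempotent.

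First I would note that since $x\in\ima(e)$, we have $x=x_i$ for some $i\in\{1,\ldots,k\}$ and $xe\inv=A_i$; also, $y$ lies in exactly one block, say $y\in A_m$ (equivalently $ye=x_m$). Because $ef=fe$, Lemma~\ref{lcen} applied to the pair $(e,f)$ yields an index $l$ with $x_mf=x_l$ and $A_mf\subseteq A_l$. Since $y\in A_m$, this gives $yf\in A_l$; but the hypothesis says $yf\in xe\inv=A_i$, and as the blocks $A_1,\ldots,A_k$ are pairwise disjoint we must have $A_l=A_i$, hence $x_mf=x_l=x_i=x$. Finally, since $f=f^2$, we get $xf=(x_mf)f=x_mf^2=x_mf=x$, which is the desired conclusion.

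There is no serious obstacle here; the only point requiring a little care is the index bookkeeping — correctly invoking Lemma~\ref{lcen} to locate the block containing $yf$ and then using the disjointness of the kernel classes of $e$ to force that block to be $A_i=xe\inv$. The closing step is the short observation that idempotency of $f$ upgrades the identity $x_mf=x$ to $xf=x$.
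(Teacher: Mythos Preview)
Your argument is correct and is precisely the derivation from Lemma~\ref{lcen} that the paper intends; the corollary is stated there without proof, and your index bookkeeping plus the final use of $f^2=f$ is the natural way to fill it in. As a minor remark, one can also bypass Lemma~\ref{lcen} entirely: from $yf\in xe\inv$ we get $x=(yf)e=y(fe)=y(ef)=(ye)f$, and then $xf=(ye)f^2=(ye)f=x$.
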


\begin{lemma}\label{l4}
Let $x_1,\ldots,x_k$, where $k\geq2$, be pairwise distinct elements of $X$. Let $e_1,\ldots,e_k$
be idempotents in $T(X)$ such that $x_ie_i=x_{i+1}$ for every $i$, $1\leq i\leq k$, where we assume that $x_{k+1}=x_1$.
Then there is $j\in\{2,\ldots,k\}$ such that $e_1e_j\ne e_je_1$.
\end{lemma}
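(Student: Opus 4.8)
The plan is to argue by contradiction. Suppose that $e_1e_j = e_je_1$ for every $j \in \{2,\ldots,k\}$; in fact, once we have this for $j=2$ we will try to bootstrap, but let me first record what the commuting hypotheses give us directly. Since $x_ie_i = x_{i+1}$, each $e_i$ maps $x_i$ off itself, so $x_i \notin \ima(e_i)$ (an idempotent fixes its image pointwise). The key is to track the element $x_1$ and its images under products of the $e_i$'s, using Corollary~\ref{ccen} to force fixed points.

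First I would establish the following claim by induction on $i$: if $e_1$ commutes with each of $e_2,\ldots,e_i$, then $x_1 e_1 = x_1$ is false — more precisely, I would trace $x_1 e_1 e_2 \cdots e_{i}$ and show it equals $x_{i+1}$, while simultaneously showing $x_{i+1} e_1 = x_{i+1}$. The base case: $x_1(e_1e_2) = (x_1e_1)e_2 = x_2 e_2 = x_3$, and on the other hand $x_1(e_2e_1) = (x_1e_2)e_1$. Here I need to know what $x_1 e_2$ is; this is where the argument needs care, because $e_2$ is only constrained at $x_2$. The cleaner route is to apply Corollary~\ref{ccen} directly: since $x_2 \in \ima(e_1)$? — no, that need not hold either. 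Let me instead set up the right invariant.

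The correct invariant to push is: \emph{$x_{i+1}$ lies in $\ima(e_i)$} (true, since $x_{i+1} = x_i e_i$), and I want to show by downward pressure that $x_{i+1}$ is fixed by all the earlier idempotents. Concretely, I would show: if $e_1$ commutes with $e_2,\ldots,e_k$, then $x_1 e_k = x_1$ (because $x_1 = x_{k+1} = x_k e_k \in \ima(e_k)$, so $e_k$ fixes $x_1$), and then propagate: $x_1 e_{k-1}e_k = x_1 e_{k-1}$, and using the commuting relations rewrite products to slide $e_1$ through and produce, via Corollary~\ref{ccen}, the conclusion $x_1 e_1 = x_1$. But $x_1 e_1 = x_2 \neq x_1$, a contradiction. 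The main obstacle, and the step requiring the most care, is exactly this propagation: arranging a chain of applications of Lemma~\ref{lcen}/Corollary~\ref{ccen} so that from "$e_1$ commutes with $e_j$ for all $j \ge 2$" one deduces "$e_1$ fixes $x_2$" — one must choose the intermediate elements $x$, $y$ in Corollary~\ref{ccen} correctly at each stage (taking $x = x_{i+1} \in \ima(e_i)$ and finding $y$ with $y e_i \in x_{i+1} e_i^{-1}$, e.g. $y = x_i$ shifted appropriately), and verify the kernel/image bookkeeping of Lemma~\ref{lcen} closes up around the cycle $x_1 \to x_2 \to \cdots \to x_k \to x_1$. Once the cycle closes, the contradiction $x_1 e_1 = x_1 \neq x_2 = x_1 e_1$ is immediate, so some $e_1 e_j \neq e_j e_1$ with $j \in \{2,\ldots,k\}$.
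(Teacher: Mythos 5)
You have the right overall strategy --- assume $e_1$ commutes with every $e_j$, and propagate the property ``fixed by $e_1$'' around the cycle via Corollary~\ref{ccen} until you reach the contradiction $x_1e_1=x_1$ --- and this is exactly the paper's strategy. But the write-up stops short of a proof: you explicitly defer ``the main obstacle, \ldots exactly this propagation'' without carrying it out, and the two places where you hesitate are precisely where you go astray. First, you question whether $x_2\in\ima(e_1)$ holds (``no, that need not hold either''): it does hold, trivially, since $x_2=x_1e_1$; and because an idempotent fixes its image pointwise, $x_2e_1=x_2$ comes for free. This is the base case of the induction, not something to be extracted from the commuting hypotheses. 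Second, at the end you state the goal as deducing that ``$e_1$ fixes $x_2$'' --- but as just noted that is automatic; the goal is to deduce that $e_1$ fixes $x_1$, which contradicts $x_1e_1=x_2\ne x_1$.

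The missing inductive step is a single application of Corollary~\ref{ccen}. Suppose $x_ie_1=x_i$ for some $i\ge2$. Apply the corollary with $e=e_i$, $f=e_1$, $x=x_{i+1}$, $y=x_i$: we have $x_{i+1}=x_ie_i\in\ima(e_i)$ and $ye_1=x_ie_1=x_i\in x_{i+1}e_i^{-1}$, so (since $e_1e_i=e_ie_1$ by assumption) the corollary yields $x_{i+1}e_1=x_{i+1}$. Starting from $x_2e_1=x_2$ and iterating $k-1$ times gives $x_{k+1}e_1=x_{k+1}$, i.e.\ $x_1e_1=x_1$, the desired contradiction. Your alternative ``backward'' suggestion (starting from $x_1e_k=x_1$ and propagating $x_1e_{k-1}e_k=x_1e_{k-1}$) is both unnecessary and, as stated, unjustified --- there is no reason at that stage why $x_1e_{k-1}$ should be fixed by $e_k$. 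With the forward induction above, your outline becomes the paper's proof.
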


\begin{proof}
Suppose to the contrary that $e_1e_j=e_je_1$ for every $j\in\{2,\ldots,k\}$.
Then $x_3\in\ima(e_2)$ and $x_2e_1=x_2\in x_3e_2^{-1}$, and so $x_3e_1=x_3$ by Corollary~\ref{ccen}.
Now, $x_4\in \ima(e_3)$ and $x_3e_1=x_3\in x_4e_3^{-1}$, and so $x_4e_1=x_4$ again by Corollary~\ref{ccen}.
Using the foregoing argument $k-1$ times, we obtain that $x_{k+1}e_1=x_{k+1}$, that is,
$x_1e_1=x_1$. But this is a contradiction since $x_1e_1=x_2$. The result follows.
\end{proof}

Lemma~\ref{l4} implies the following proposition, which will be crucial for the construction presented in Section~\ref{sinc}.
The proposition is of an independent interest since, in part, it states that if $S$ is a semilattice in $T(X)$,
then the intersection of all images of the elements of $S$ is not empty.

\begin{prop}\label{p5}
Let $|X|\geq2$ and let $S$ be a semilattice in $T(X)$. Then there are $t,u\in X$ with $u\ne t$ such that for every $e\in S$,
$te=t$ and $ue\in\{u,t\}$.
\end{prop}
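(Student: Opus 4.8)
The plan is to exploit the key structural consequence of Lemma~\ref{l4}: no semilattice in $T(X)$ can contain idempotents whose combined action produces a nontrivial cycle $x_1\mapsto x_2\mapsto\cdots\mapsto x_k\mapsto x_1$ on distinct points. This says that the ``reachability'' relation on $X$ coming from $S$ is antisymmetric, so it can be arranged into a partial order, and the desired $t$ should be a maximal element of that order, with $u$ an element sitting directly below it.

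First I would define, for $x,y\in X$, the relation $x\preceq y$ to mean that either $x=y$ or there exist $e_1,\dots,e_m\in S$ with $ye_1e_2\cdots e_m = x$ (equivalently, using the preorder generated by single steps $y\mapsto ye$ for $e\in S$). Reflexivity and transitivity are immediate; the crucial point is \emph{antisymmetry}. Suppose $x\ne y$ with $x\preceq y$ and $y\preceq x$. Then I can produce a closed chain $x = z_0, z_1, \dots, z_r = x$ of distinct points (after deleting repeats, keeping at least two distinct points since $x\ne y$ appears among them) together with idempotents $f_i\in S$ moving $z_i$ to $z_{i+1}$ — here one must be slightly careful to extract a genuine cycle through \emph{pairwise distinct} points from an arbitrary closed walk, which is a routine pigeonhole/minimality argument. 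Applying Lemma~\ref{l4} to such a cycle yields two elements of $S$ that do not commute, contradicting the fact that $S$ is a semilattice. Hence $\preceq$ is a partial order on $X$.

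Next, since $X$ is finite and nonempty, $(X,\preceq)$ has a maximal element $t$. I claim $te=t$ for every $e\in S$: indeed $te\preceq t$ always holds by definition of $\preceq$, so maximality forces $te=t$. This disposes of the first requirement. For the second, I need a second point $u\ne t$ with $ue\in\{u,t\}$ for all $e\in S$ — here I use $|X|\geq 2$. Consider the set $Y = X\setminus\{t\}$, which is nonempty, and restrict attention to the preorder $\preceq$ on $Y$; pick $u$ maximal in $(Y,\preceq)$. For any $e\in S$, we have $ue\preceq u$. If $ue\in Y$ then maximality of $u$ in $Y$ gives $ue = u$; the only other possibility is $ue = t$. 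Either way $ue\in\{u,t\}$, as required.

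The main obstacle is the antisymmetry step, and within it the passage from an arbitrary closed walk in the reachability relation to a cycle on \emph{pairwise distinct} vertices with the cyclic hypotheses of Lemma~\ref{l4} (the idempotent $e_i$ must satisfy $x_ie_i = x_{i+1}$ with indices mod $k$ and $k\geq 2$). One clean way to handle this: among all bad configurations (nonempty set of distinct points $\{z_1,\dots,z_k\}$ with $k\geq 2$ and idempotents in $S$ realizing the cyclic shift), choose one with $k$ minimal; minimality guarantees the points are pairwise distinct, since a repeated point would split the cycle into a strictly shorter one. Then Lemma~\ref{l4} applies directly and contradicts commutativity in $S$. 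Everything else is bookkeeping with the generated preorder and finiteness of $X$.
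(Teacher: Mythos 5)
Your proposal is essentially the paper's argument in different packaging: both rest on Lemma~\ref{l4} to forbid nontrivial cycles under the reachability relation induced by $S$, and both use finiteness to land on a point that $S$ cannot move; your ``extract a cycle of pairwise distinct points from a closed walk by minimality'' step is exactly the paper's choice of minimal $p$ and then minimal $q$ in its sequence $y_{i+1}=y_if_i$. The one concrete error is a direction flip. With your convention that $x\preceq y$ means $x$ is reachable from $y$, the relation $te\preceq t$ holds, so a \emph{maximal} element $t$ of $(X,\preceq)$ (one with nothing strictly above it) does \emph{not} force $te=t$; what you need is a \emph{minimal} element, i.e.\ one with no $x\ne t$ satisfying $x\preceq t$, so that $te\preceq t$ collapses to $te=t$. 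The same correction applies to $u$: it must be chosen minimal in $(X\setminus\{t\},\preceq)$, so that $ue\preceq u$ forces $ue=u$ unless $ue=t$. With that flip (or with $\preceq$ defined in the opposite direction) your proof is correct and is, at bottom, the same proof as the paper's.
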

\begin{proof}
Suppose to the contrary that there is no $t\in X$ such that $te=t$ for every $e\in S$. Construct a sequence
$y_1,y_2,y_3,\ldots$ of elements of $X$ and a sequence $f_1,f_2,f_3,\ldots$ of elements of $S$
as follows. Start with any $y_1\in X$. By our assumption, there is $f_1\in S$ such that $y_1f_1\ne y_1$.
Set $y_2=y_1f_1$. By our assumption again, there is $f_2\in S$ such that $y_2f_2\ne y_2$.
Set $y_3=y_2f_2$, and continue the construction in the same way. Since $X$ is finite, there is
$p\geq1$ such that $y_p=y_q$ for some $q>p$. Select the smallest such $p$, and then the smallest $q$
such that $q>p$ and $y_p=y_q$. Note that $q>p+1$ since, by the construction, $y_{p+1}=y_pf_p\ne y_p$.
By the minimality of $p$ and $q$, $y_p,y_{p+1},\ldots,y_{q-1}$ are pairwise distinct, $y_if_i=y_{i+1}$
for every $i\in\{p,p+1,\ldots,q-1\}$, and $x_{q-1}f_{q-1}=x_q=x_p$.
Thus, by Lemma~\ref{l4}, there is $j\in\{p+1,\ldots,q-1\}$ such that $f_pf_j\ne f_jf_p$, which is
a contradiction. Hence a desired $t$ exists.

Suppose to the contrary that there is no $u\in X$ such that $u\ne t$ and $ue\in\{u,t\}$
for every $e\in S$. Then we obtain a contradiction
in the same way as in the first part of the proof. The only difference is that we start with $y_1\ne t$
(which is possible since $|X|\geq2$) and for each $i$, we select $f_i\in S$ such that $y_if\notin\{y_i,t\}$.
Hence a desired $u$ exists, which concludes the proof.
\end{proof}

\section{A Collection of Maximal Semilattices in $T(X)$}\label{set}
\setcounter{equation}{0}
In this section, we define a collection $\{E_t\}_{t\in X}$ of $n$ isomorphic maximal semilattices in $T(X)$
such that $|E_t|=2^{n-1}$ for every $t\in X$. In Sections~\ref{sinc} and~\ref{smst}, we will prove
that $2^{n-1}$ is the maximum cardinality of a semilattice in $T(X)$ and that $\{E_t\}_{t\in X}$
is the collection of all semilattices in $T(X)$ whose cardinality is $2^{n-1}$.

\begin{defi}\label{det}
{\rm
Fix $t\in X$ and let $X_t=X-\{t\}$, so $|X_t|=n-1$. For any $A=\{x_1,\ldots,x_k\}\subseteq X_t$
(where $0\leq k\leq n-1$, so $A$ may be empty), define $\ea\in T(X)$ by
\[
\ea=(X-A,t\ran(\{x_1\},x_1\ran\ldots(\{x_k\},x_k\ran
\]
(see notation (\ref{eide})). Note that $x\ea=x$ if $x\in A$, $x\ea=t$ if $x\notin A$,
and $\ea\eb=\eab$ for all $A,B\subseteq X_t$. We define a subset $E_t$ of $T(X)$ by
\[
E_t=\{\ea:A\subseteq X_t\}.
\]
}
\end{defi}

Recall that $I(X)$ denotes the symmetric inverse semigroup of all partial one-to-one
transformations on $X$ and that $E(I(X))$ is the semilattice of idempotents of $I(X)$.
Every $\mu\in E(I(X))$ is completely
determined by its domain: if $A=\dom(\mu)$, then
$x\mu=x$ for every $x\in A$ (and $x\mu$ is undefined for every $x\notin A$). We will
denote the idempotent in $I(X)$ with domain $A$ by $\ma$. Note that for all $A,B\subseteq X$,
$\ma\mb=\mab$. The idempotent $\mem$ is the zero in $I(X)$. We agree that $I(\emptyset)=\{0\}$.

\begin{prop}\label{p2}
For every $t\in X$,
\begin{itemize}
\item[\rm(1)] $E_t$ is a maximal semilattice in $T(X)$.
\item[\rm(2)] $E_t$ is isomorphic to $E(I(X_t))$ and $|E_t|=2^{n-1}$.
\end{itemize}
\end{prop}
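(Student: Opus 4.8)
The strategy is to handle part~(2) first, since it is essentially bookkeeping, and then use it to power the maximality argument in part~(1).

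For part~(2), the plan is to exhibit an explicit isomorphism $\varphi\colon E(I(X_t))\to E_t$ by $\mu_A\mapsto e_{\!A}$ for each $A\subseteq X_t$. First I would check that $\varphi$ is well defined and a bijection: distinct subsets $A$ of $X_t$ give distinct idempotents $e_{\!A}$ (they are distinguished by their fixed-point sets, which is exactly $A$), so $\varphi$ is injective, and it is clearly surjective onto $E_t$ by the definition of $E_t$. Since $|\mathcal P(X_t)|=2^{n-1}$, this gives $|E_t|=2^{n-1}$. That $\varphi$ is a homomorphism is the identity $e_{\!A}e_{\!B}=e_{\!A\cap B}$ already recorded in Definition~\ref{det}, matched against $\mu_A\mu_B=\mu_{A\cap B}$ recorded for $I(X_t)$; in particular $E_t$ is closed under composition and, being a homomorphic image of the semilattice $E(I(X_t))$, is itself a semilattice in $T(X)$.

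For part~(1), it remains to show $E_t$ is a \emph{maximal} semilattice. Suppose $S$ is a semilattice in $T(X)$ with $E_t\subseteq S$, and let $f\in S$; the goal is $f\in E_t$, i.e.\ $f=e_{\!A}$ where $A=\fix(f)\cap X_t$. The key leverage is that $f$ must commute with every $e_{\!A}\in E_t$, so I would apply Corollary~\ref{ccen} and Lemma~\ref{lcen} to the pair $f$ and a well-chosen $e_{\!A}$. Taking $A=X_t$, the idempotent $e_{X_t}=(\{t\},t\ran(\{x\},x\ran_{x\in X_t}$ is the identity map on $X$, which forces nothing; instead I would take $A=\emptyset$, so $e_{\tem}$ is the constant map with image $\{t\}$. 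Commuting with a constant map $e_{\tem}$ forces $tf=t$ (apply Lemma~\ref{lcen}: the single kernel class is all of $X$, and $tf$ must lie in that class and equal the image point $t$). Next, for each $x\in X_t$ take $A=\{x\}$: commutativity of $f$ with $e_{\{x\}}=(\{x\},x\ran(X\setminus\{x\},t\ran$ means (Lemma~\ref{lcen}) that $f$ permutes the two kernel classes $\{x\}$ and $X\setminus\{x\}$ in a way respecting their distinguished points $x$ and $t$; since $tf=t$ and $t\in X\setminus\{x\}$, the class $X\setminus\{x\}$ maps into itself, hence $\{x\}$ maps into itself, i.e.\ $xf=x$ whenever $xf\ne t$ — more precisely one gets $xf\in\{x,t\}$. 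Thus for every $x\in X$, $xf\in\{x,t\}$, which is exactly the statement $f=e_{\!A}$ with $A=\{x\in X_t: xf=x\}$. Hence $f\in E_t$ and $S=E_t$, proving maximality.

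The main obstacle is the maximality argument in part~(1): part~(2) is routine once the isomorphism is written down. Within part~(1) the delicate point is choosing the right family of idempotents in $E_t$ against which to test an arbitrary $f\in S$ and reading off from Lemma~\ref{lcen}/Corollary~\ref{ccen} the precise constraint $xf\in\{x,t\}$ for \emph{all} $x\in X$; one must be careful that commuting with $e_{\tem}$ and with each $e_{\{x\}}$ together pin down $f$ completely, with no residual freedom, so that $f$ is forced to be one of the $2^{n-1}$ maps $e_{\!A}$.
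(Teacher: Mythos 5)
Your proof is correct. Part (2) is essentially the paper's argument (the paper writes the isomorphism in the direction $\ea\mapsto\ma$, yours goes the other way; both reduce to matching $e_Ae_B=e_{A\cap B}$ against $\mu_A\mu_B=\mu_{A\cap B}$ and counting subsets of $X_t$). For part (1) you take a genuinely different route to maximality. The paper argues in the contrapositive direction and by bare hands: given an idempotent $f\notin E_t$, it picks $z,y$ with $zf=y$, $y\ne t$, $z\ne y$, and exhibits a single witness $e_A\in E_t$ (with $y\notin A$, and additionally $z\in A$ when $z\ne t$) such that $fe_A\ne e_Af$, by evaluating both products at $t$ or at $z$; no appeal to Lemma~\ref{lcen} is needed. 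You instead compute the centralizer of $E_t$ directly via Lemma~\ref{lcen}: commuting with the constant map $e_{\emptyset}$ forces $tf=t$, and commuting with each $e_{\{x\}}$ then forces $xf\in\{x,t\}$, which pins $f$ down to $e_A$ with $A=\{x\in X_t: xf=x\}$. Your version in fact proves slightly more: any transformation (idempotent or not) commuting with all of $E_t$ already lies in $E_t$, since $tf=t$ together with $xf\in\{x,t\}$ for all $x$ forces $f$ to be idempotent automatically; the cost is the reliance on Lemma~\ref{lcen}, which the paper's proof of this proposition avoids. One small wording slip to fix: from commutation with $e_{\{x\}}$ the class $\{x\}$ need \emph{not} map into itself (it may map to $t$); the only conclusion Lemma~\ref{lcen} gives, and the one you ultimately use, is $xf\in\{x,t\}$.
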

\begin{proof}
Let $t\in X$. For all $\ea,\eb\in E_t$, $\ea\eb=\eab\in E_t$, $\ea\ea=\eaa=\ea$, and $\ea\eb=\eab=\eba=\eb\ea$.
Thus $E_t$ is a semilattice. Let $f\in T(X)$ be an idempotent such that $f\notin E_t$. By the definition of $E_t$,
there are $y,z\in X$ such that $z\ne y$, $y\ne t$, and $zf=y$. Suppose $z=t$, so $tf=y$. Take $\ea\in E_t$ such that
$y\notin A$. Then $t(f\ea)=y\ea=t$ and $t(\ea f)=tf=y$, so $f\ea\ne\ea f$. Suppose $z\ne t$.
Take $\ea\in E_t$ such that $y\notin A$ and $z\in A$. Then $z(f\ea)=y\ea=t$ and $z(\ea f)=zf=y$,
so $f\ea\ne\ea f$. It follows that $E_t$ is a maximal semilattice in $T(X)$. We have proved (1).

Define $\phi:E_t\to E(I(X_t))$ by $\ea\phi=\ma$. Then clearly $\phi$ is a bijection and for all $\ea,\eb\in E_t$,
\[
(\ea\eb)\phi=\eab\phi=\mab=\ma\mb=(\ea\phi)(\eb\phi).
\]
Thus $\phi$ is an isomorphism. Finally, it is clear that the mapping $\ea\to A$ is a bijection
from $E_t$ onto $\mathcal{P}(X_t)$, and so $|E_t|=|\mathcal{P}(X_t)|=2^{|X_t|}=2^{n-1}$. We have proved (2).
\end{proof}

Nichols \cite{Ni76} has proved that for every $t\in X$, the set
\[
I_t=\{a\in T(X):\mbox{$ta=t$ and $|xa^{-1}|=1$ for all $x\in\ima(a)-\{t\}$}\}
\]
is a maximal inverse subsemigroup of $T(X)$. We note that $E_t$ is the semilattice of idempotents of
the inverse semigroup $I_t$.

\section{An Inductive Construction}\label{sinc}
\setcounter{equation}{0}
Throughout this section, we assume that $|X|\geq2$
and we fix a semilattice $S$ in $T(X)$ and $t,u\in X$ with $u\ne t$ such that
for all $e\in S$, $te=t$ and $ue\in\{u,t\}$. (Such $t$ and $u$ exist by Proposition~\ref{p5}.)
Our goal is to construct a semilattice $S^*$ in $T(X)$ such that $|S|\leq2|S^*|$ and $S^*$ can
be embedded in $T(X_u)$. (Recall that $X_u=X-\{u\}$, so $|X_u|=n-1$.)

\begin{defi}\label{d6}
{\rm
For $g\in S$, define $\gs\in T(X)$ by
\[
x\gs=\left\{
\begin{array}{ll}xg & \mbox{if $xg\ne u$,}\\t & \mbox{if $xg=u$.}
\end{array}
\right.
\]
We define a subset $S^*$ of $T(X)$ by $S^*=\{\gs:g\in S\}$.
}
\end{defi}

\begin{lemma}\label{l7}
$S^*$ is a semilattice in $T(X)$.
\end{lemma}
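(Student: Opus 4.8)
The plan is to show directly that $S^*$ is closed under composition, that its elements are idempotents, and that they commute pairwise. The map $g \mapsto g^*$ is essentially ``redirect everything that went to $u$ so that it goes to $t$ instead''; since $t$ is a fixed point of every element of $S$ and $u$ maps into $\{u,t\}$ under every element of $S$, this redirection should interact cleanly with composition.

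First I would establish the basic pointwise description of $g^*$: for $x \in X$, $xg^* = xg$ unless $xg = u$, in which case $xg^* = t$. In particular $tg^* = tg = t$ for all $g \in S$ (since $t$ is fixed by $g$ and $t \ne u$), and $ug^* \in \{u, t\}g^* $... more precisely, if $ug = u$ then $ug^* = t$, and if $ug = t$ then $ug^* = t$, so in all cases $ug^* = t$. This last fact is the key structural observation: every element of $S^*$ sends both $t$ and $u$ to $t$. Note also that the image of $g^*$ avoids $u$, i.e. $u \notin \ima(g^*)$, again by construction.

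Next I would verify closure and idempotency by computing $(gh)^*$ for $g, h \in S$ and comparing with $g^* h^*$. Take $x \in X$. If $xg \ne u$, then $xg^* = xg$, and then $(xg^*)h^* = (xg)h^*$, which equals $(xg)h$ unless $(xg)h = u$, in which case it is $t$; meanwhile $x(gh) = (xg)h$, so $x(gh)^* = (xg)h$ unless $(xg)h = u$, in which case it is $t$ — these agree. If $xg = u$, then $xg^* = t$, so $(xg^*)h^* = th^* = t$; on the other side $x(gh) = (xg)h = uh \in \{u,t\}$, so in either subcase $x(gh)^* = t$ (using $uh = u \Rightarrow (uh)^* = t$ and $uh = t \Rightarrow (uh)^* = t$). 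Hence $g^* h^* = (gh)^*$, so $S^*$ is closed, and taking $g = h$ gives $g^* g^* = (gg)^* = g^*$ since $S$ consists of idempotents; so every element of $S^*$ is idempotent. Commutativity then follows immediately: $g^* h^* = (gh)^* = (hg)^* = h^* g^*$ since $gh = hg$ in $S$. Together these say $S^*$ is a commutative band, i.e. a semilattice, and it is a subset of $T(X)$ by construction.

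I expect the main subtlety — though it is more bookkeeping than genuine difficulty — to be the case analysis in the closure computation, specifically making sure every branch ($xg = u$ vs. $xg \ne u$, then $uh = u$ vs. $uh = t$, then $(xg)h = u$ vs. not) is handled and that the hypotheses $te = t$, $ue \in \{u,t\}$ for $e \in S$ are invoked exactly where needed (the first to pin down $tg^* = t$, the second to guarantee that when $xg = u$ the value $(xg)h = uh$ lands in $\{u,t\}$ and hence $(xg^*)h^* = t = x(gh)^*$). Everything else is routine once the pointwise formula for $g^*$ is in hand.
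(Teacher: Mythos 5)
Your proof is correct and follows essentially the same route as the paper: the core of both arguments is the pointwise verification that $(gh)^*=\gs\hs$, i.e.\ that $g\mapsto\gs$ is a homomorphism, after which idempotency and commutativity of $S^*$ are immediate (the paper phrases this last step as ``a homomorphic image of a semilattice is a semilattice,'' while you spell it out, and your case split is organized on $xg=u$ versus $xg\ne u$ rather than on $x(gh)=u$ versus $x(gh)\ne u$, but the computation is the same).
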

\begin{proof}
Define $\phi:S\to T(X)$ by $g\phi=\gs$. We claim that $\phi$ is a homomorphism.
Let $g,h\in S$ and let $x\in X$. Suppose $x(gh)=u$. Then $x(gh)^*=t$ and $(xg)\hs=t$. If $xg\ne u$ then $x\gs=xg$,
and so $x(\gs\hs)=(xg)\hs=t$. If $xg=u$ then $x\gs=t$, and so $x(\gs\hs)=t\hs=th=t$. Hence $x(gh)^*=x(\gs\hs)$.

Suppose $x(gh)\ne u$. Then $x(gh)^*=x(gh)$ and $(xg)\hs=(xg)h$. If $xg\ne u$
then $x\gs=xg$, and so $x(\gs\hs)=(xg)\hs=(xg)h$. If $xg=u$ then $(xg)h=t$ (since $(xg)h=uh\in\{u,t\}$
and $(xg)h\ne u$), and so $x(\gs\hs)=t\hs=th=t=(xg)h$. Hence $x(gh)^*=x(\gs\hs)$.

We have proved that $(gh)^*=\gs\hs$ for all $g,h\in S$, and the claim follows. It is clear that $\ima(\phi)=S^*$.
Hence $S^*$ is a semilattice in $T(X)$ since any homomorphic image of a semilattice is a semilattice.
\end{proof}

\begin{lemma}\label{l8}
Let $g,h\in S$ be such that $xg=u$ and $yh=u$ for some $x,y\in X$. Suppose $g\ne h$.
Then $\gs\ne\hs$.
\end{lemma}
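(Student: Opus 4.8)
The plan is to show that the map $\phi\colon S\to S^*$, $g\mapsto\gs$, from Lemma~\ref{l7} is injective when restricted to the set of those $g\in S$ whose image contains $u$; Lemma~\ref{l8} is exactly this statement. So suppose $g,h\in S$ both have $u$ in their image, with $xg=u$ and $yh=u$ for some $x,y\in X$, and suppose $\gs=\hs$; I want to conclude $g=h$. The strategy is to compare the action of $g$ and $h$ pointwise, using that $\gs$ and $\hs$ agree, and to control the one place where $g$ and $\gs$ can differ, namely on the fibre $ug^{-1}=\{z\in X:zg=u\}$.

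First I would record the elementary facts: for any $w\in X$, $w\gs=wg$ unless $wg=u$, in which case $w\gs=t$; and since $tg=t$ (as $g\in S$ and $t$ is the fixed point from Proposition~\ref{p5}), we have $t\notin ug^{-1}$, so $\gs$ and $g$ agree except possibly on points sent by $g$ to $u$, where $\gs$ sends them to $t$. Now fix $w\in X$. If $wg\ne u$ and $wh\ne u$, then $wg=w\gs=w\hs=wh$. The delicate cases are where one of $wg$, $wh$ equals $u$. Suppose $wg=u$; then $w\hs=w\gs=t$, so either $wh=t$ or $wh=u$. I must rule out $wh=t$. Here is where I would bring in the hypothesis $xg=u$ together with commutativity of $S$.

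The key step — and I expect this to be the main obstacle — is to show that $ug^{-1}=uh^{-1}$, i.e. $g$ and $h$ have the same fibre over $u$. Once this is established, the previous paragraph finishes the argument: off this common fibre $g$ and $h$ agree (both equal their starred versions there), and on the common fibre both are constant with value $u$, so $g=h$. To prove $ug^{-1}=uh^{-1}$ I would argue as follows. Take $w$ with $wg=u$; I want $wh=u$. We know $wh\in\{t,u\}$. Consider the idempotent $gh=hg\in S$. Since $xg=u$ we get $x(gh)=xgh=uh$, and since $yh=u$ we get $y(hg)=yhg=ug$. Now use that $S$ is a semilattice, so $gh$ is an idempotent; applying Corollary~\ref{ccen} or Lemma~\ref{lcen} to the commuting idempotents $g$ and $h$ should force $uh=u$ (roughly: $u=xg\in\ima(g)$, and whatever $h$ does to the $g$-fibre of $u$ must land inside that fibre, and the fixed point structure pins it down). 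Dually one gets $ug=u$ from $yh=u$, so $u\in\ima(g)\cap\ima(h)$, $ug=u$, $uh=u$; then for any $w$ with $wg=u$ we have $wg\in\ima(h)$ and... applying Corollary~\ref{ccen} with the roles arranged so that $wh$ lies in $(ug)h^{-1}$-type position forces $wh=u$. Symmetrically $uh^{-1}\subseteq ug^{-1}$, giving equality of the fibres.

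An alternative, possibly cleaner, route avoiding a detailed fibre analysis: show directly that for $g$ with $u\in\ima(g)$ one can recover $g$ from $\gs$ by the rule $wg=u$ iff $w\gs=t$ and $w\ne$ (anything $g$-fixing $t$) — but this needs to distinguish the genuine $t$-preimages of $\gs$ from the disguised $u$-preimages, and that distinction is precisely what commutativity buys us through the observation that $ug=u$ whenever $u\in\ima(g)$ (which itself follows from Proposition~\ref{p5} applied inside $S$, or from Corollary~\ref{ccen} with $e=f=g$ trivially, plus the $t,u$ dichotomy: $ug\in\{u,t\}$, and if $ug=t$ while some $xg=u$ then $x(g\cdot g)=ug=t\ne u=xg$, contradicting $g$ idempotent). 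With $ug=u$ in hand for both $g$ and $h$, the preimage $ug^{-1}$ is recovered from $\gs$ as $\{w:w\gs=t\}\setminus t(\text{-fixers})$, but more simply: $w\gs=t$ and $wg\ne t$ cannot be detected from $\gs$ alone, so instead I use commutativity once more to equate the two ambiguous sets. I would write the final proof along the first route, as it is the most direct given the tools (Lemma~\ref{lcen}, Corollary~\ref{ccen}) already available.
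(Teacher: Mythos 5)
Your proposal follows essentially the paper's argument in contrapositive form, and its two supporting facts are correctly established: $ug=u$ and $uh=u$ do follow from idempotency together with $xg=u$, $yh=u$ (your computation $x(gg)=xg$ versus $ug=t$ is exactly right), and the reduction to showing $ug^{-1}=uh^{-1}$ does finish the job. The one genuine gap is at the step you yourself flag as the crux: for $w$ with $wg=u$ (hence $wh\in\{t,u\}$), you rule out $wh=t$ by ``applying Corollary~\ref{ccen} with the roles arranged so that $wh$ lies in $(ug)h^{-1}$-type position,'' with an ellipsis where the argument should be. Corollary~\ref{ccen} cannot be arranged to do this: its conclusion $xf=x$ applies only to points $x\in\ima(e)$, whereas the point $w$ you must control is an arbitrary element of the fibre $ug^{-1}$ and need not lie in $\ima(g)$. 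The gap closes in one line using the commutativity computation you already wrote down at $x$, but applied at $w$ instead: $(wh)g=(wg)h=uh=u$, while $tg=t\ne u$, so $wh\ne t$ and hence $wh=u$. (Equivalently, Lemma~\ref{lcen} with $e=g$ and $a=h$ gives $(ug^{-1})h\subseteq(uh)g^{-1}=ug^{-1}$, and $t\notin ug^{-1}$.) This is precisely the paper's central step $(zh)g=(zg)h=uh=(yh)h=yh=u$; with it inserted, your proof is correct and substantively identical to the paper's, merely organized as ``$\gs=\hs\Rightarrow g=h$'' rather than exhibiting a point where $\gs$ and $\hs$ differ.
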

\begin{proof}
Since $g\ne h$, there is $z\in X$ such that $zg\ne zh$.
If $zg\ne u$ and $zh\ne u$, then $z\gs=zg\ne zh=z\hs$.

Suppose $zg=u$ or $zh=u$. We may assume that $zg=u$.
Then $z\gs=t$. Since $zh\ne zg=u$, we have $z\hs=zh$.
Since $gh=hg$,
\[
(zh)g=(zg)h=uh=(yh)h=y(hh)=yh=u,
\]
which implies that $zh\ne t$ (since $tg=t\ne u$). Thus $z\gs=t\ne zh=z\hs$.

Hence $z\gs\ne z\hs$ in all cases, and so $\gs\ne\hs$.
\end{proof}

\begin{lemma}\label{l9}
$|S|\leq 2|S^*|$.
\end{lemma}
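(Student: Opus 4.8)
The goal is to bound $|S|$ by $2|S^*|$, where $S^*$ is the image of the homomorphism $\phi\colon g\mapsto g^*$. Since $\phi$ is a surjective homomorphism onto $S^*$, we have $|S|=\sum_{g^*\in S^*}|\phi^{-1}(g^*)|$, so it suffices to show that every fiber $\phi^{-1}(g^*)$ has at most two elements. I would first split $S$ into two parts: $S_1=\{g\in S: u\notin\ima(g)\}$ and $S_2=\{g\in S: u\in\ima(g)\}$. For $g\in S_1$ we have $g^*=g$ (nothing is remapped), so $\phi$ restricted to $S_1$ is injective, and moreover for such $g$ the map $g^*=g$ genuinely has $u\notin\ima(g^*)$. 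For $g\in S_2$, Lemma~\ref{l8} says that $\phi$ restricted to $S_2$ is also injective. So each of the two pieces contributes injectively to $S^*$, and the bound $|S|\le 2|S^*|$ will follow once I check that the two injective pieces land in a way that each element of $S^*$ is hit at most twice.

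The cleanest way to package this: define $\phi_1=\phi|_{S_1}$ and $\phi_2=\phi|_{S_2}$; both are injective by the above (the first because $g^*=g$ there, the second by Lemma~\ref{l8}). Then $S^*=\ima(\phi)=\ima(\phi_1)\cup\ima(\phi_2)$, hence
\[
|S|=|S_1|+|S_2|=|\ima(\phi_1)|+|\ima(\phi_2)|\le 2|S^*|,
\]
since $|\ima(\phi_1)|\le|S^*|$ and $|\ima(\phi_2)|\le|S^*|$. That is the whole argument — no fiber-counting is even needed once injectivity on each of the two pieces is in hand.

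The only point requiring a moment's care is the injectivity of $\phi$ on $S_1$: if $g\in S_1$ then $u\notin\ima(g)$, so for every $x\in X$ we have $xg\ne u$, and therefore $xg^*=xg$ by Definition~\ref{d6}; thus $g^*=g$, and two distinct elements of $S_1$ have distinct images. Injectivity on $S_2$ is exactly the content of Lemma~\ref{l8}: any $g\in S_2$ satisfies $xg=u$ for some $x$, so any two distinct $g,h\in S_2$ satisfy the hypothesis of Lemma~\ref{l8} and hence $g^*\ne h^*$. I do not expect any genuine obstacle here; the lemma has effectively been set up by Lemmas~\ref{l7} and~\ref{l8}, and the present statement is the harvest of that preparation. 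The only mild subtlety is remembering that $S=S_1\sqcup S_2$ is a genuine partition (every $g$ either has $u$ in its image or not), so that $|S|=|S_1|+|S_2|$ exactly.
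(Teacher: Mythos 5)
Your proof is correct and follows essentially the same route as the paper: the paper sets $A=\{g\in S: xg=u\text{ for some }x\}$ (your $S_2$), notes $\gs=g$ on $S-A$ (your $S_1$) and uses Lemma~\ref{l8} for injectivity on $A$, concluding $|S|\leq|S^*|+|A^*|\leq2|S^*|$. The repackaging via two injective restrictions of $\phi$ is only cosmetically different.
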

\begin{proof}
Let $A=\{g\in S:\mbox{$xg=u$ for some $x\in X$}\}$. Then, by the definition of $S^*$, we have
$S^*=(S-A)\cup A^*$, where $A^*=\{\gs:g\in A\}$. Thus $S\subseteq S^*\cup A$, and so
$|S|\leq|S^*\cup A|\leq|S^*|+|A|$. By Lemma~\ref{l8}, $|A^*|=|A|$, and so
\[
|S|\leq|S^*|+|A|=|S^*|+|A^*|\leq|S^*|+|S^*|=2|S^*|,
\]
which concludes the proof.
\end{proof}

We will now show that the semilattice $S^*$ can be embedded in $T(X_u)$.
For a function $f:A\to B$ and $A_0\subseteq A$, we denote by
$f|_{A_0}$ the restriction of $f$ to $A_0$.

\begin{defi}\label{d10}
{\rm
We define a subset $S^*_u$ of $T(X_u)$ by
\[
S^*_u=\{e\in T(X_u):\mbox{$e=\gs|_{X_u}$ for some $g\in S$}\}.
\]
Note that indeed $S^*_u\subseteq T(X_u)$ since $x\gs\ne u$ for all $g\in S$ and $x\in X$.
}
\end{defi}

\begin{lemma}\label{l11}
$S^*_u$ is a semilattice in $T(X_u)$ isomorphic to $S^*$.
\end{lemma}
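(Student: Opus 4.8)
The plan is to exhibit an explicit isomorphism from $S^*$ onto $S^*_u$ given by restriction to $X_u$. Define $\psi:S^*\to T(X_u)$ by $\gs\psi=\gs|_{X_u}$. The image of $\psi$ is exactly $S^*_u$ by Definition~\ref{d10}, so once $\psi$ is shown to be a well-defined injective homomorphism we are done, since $S^*$ is a semilattice by Lemma~\ref{l7} and any semigroup isomorphic to a semilattice is a semilattice.

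First I would check that $\psi$ is well-defined, i.e.\ that it does not depend on the choice of $g$ with $\gs$ fixed: this is immediate, since the value $\gs|_{X_u}$ depends only on $\gs$, not on $g$. Next, injectivity: suppose $\gs|_{X_u}=\hs|_{X_u}$; I must show $\gs=\hs$, i.e.\ that agreement on $X_u$ forces agreement at $u$ as well. Here I would use the standing hypothesis that $ue\in\{u,t\}$ for all $e\in S$, so $ug\in\{u,t\}$, whence $u\gs=t$ if $ug=u$ and $u\gs=ug=t$ if $ug=t$; in either case $u\gs=t=u\hs$. So the two functions agree at $u$ as well, giving $\gs=\hs$. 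Then I would verify the homomorphism property: for $g,h\in S$ and $x\in X_u$, I want $x\big((\gs\hs)|_{X_u}\big)=x\big(\gs|_{X_u}\cdot\hs|_{X_u}\big)$. The only subtlety is that composing the restrictions requires the intermediate value $x\gs$ to lie in $X_u$, which it does since $\gs$ never takes the value $u$ (Definition~\ref{d6}); hence $x(\gs|_{X_u}\cdot\hs|_{X_u})=(x\gs)\hs=x(\gs\hs)$, and restricting the already-established identity $(gh)^*=\gs\hs$ to $X_u$ finishes this step.

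The main obstacle — though a mild one — is the injectivity argument, because restriction always risks losing information at the removed point $u$; the point is that the construction of $\gs$ together with the choice of $u$ from Proposition~\ref{p5} was engineered precisely so that $u\gs$ is the constant $t$, so no information is lost. Everything else is routine bookkeeping about restrictions of functions, and the bijectivity onto $S^*_u$ is built into the definition of $S^*_u$. Finally I would conclude that $S^*_u=\ima(\psi)$ is a subsemigroup of $T(X_u)$ isomorphic to $S^*$, hence a semilattice in $T(X_u)$.
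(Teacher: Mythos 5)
Your proposal is correct and follows essentially the same route as the paper: both define the restriction map $\gs\mapsto\gs|_{X_u}$, verify it is a homomorphism onto $S^*_u$ (using that $\gs$ never takes the value $u$), and obtain injectivity from the fact that $u\gs=t$ for every $g\in S$. Your spelled-out justification of $u\gs=t$ via the two cases $ug=u$ and $ug=t$ is a detail the paper leaves implicit, but the argument is the same.
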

\begin{proof}
Define $\phi:S^*\to T(X_u)$ by $\gs\phi=\gs|_{X_u}$. Then $\phi$ is a homomorphism
since for all $\gs,\hs\in S^*$,
\[
(\gs\hs)\phi=(\gs\hs)|_{X_u}=(\gs|_{X_u})(\hs|_{X_u})=(\gs\phi)(\hs\phi).
\]
Clearly $\ima(\phi)=S^*_u$. Suppose $\gs|_{X_u}=\hs|_{X_u}$. Then $x\gs=x\hs$ for every $x\in X_u$.
Thus $\gs=\hs$ since $u\gs=t=u\hs$. Hence $\phi$ is one-to-one, and the result follows.
\end{proof}

\section{The Maximum Cardinality Results}\label{smst}
\setcounter{equation}{0}
In this section, we prove our main results about the maximum cardinalities of semilattices in $T(X)$.

\begin{lemma}\label{l13}
Let $|X|\geq2$, $S$ be a semilattice in $T(X)$, and $t,u\in X$
be such that $u\ne t$ and for every $e\in S$, $te=t$ and $ue\in\{u,t\}$.
Suppose that $|xe^{-1}|\leq1$ for all $e\in S$ and all $x\in X-\{u,t\}$,
and that $|uf^{-1}|\geq2$ for some $f\in S$. Then $|S|<2^{n-1}$.
\end{lemma}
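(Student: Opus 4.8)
The plan is to argue by contradiction. Assume $|S|\ge 2^{n-1}$; I will show that $S$ must coincide with the semilattice $E_t$ of Definition~\ref{det}, which is absurd, since every $e_A\in E_t$ has $ue_A^{-1}$ equal to $\{u\}$ or $\emptyset$ and hence $|ue_A^{-1}|\le 1$, whereas $f\in S$ has $|uf^{-1}|\ge 2$.

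First I would feed $S$, together with the given $t,u$, into the construction of Section~\ref{sinc}, obtaining the semilattice $S^*$ with $|S|\le 2|S^*|$ (Lemma~\ref{l9}) and an isomorphic copy $S^*_u$, a semilattice in $T(X_u)$ with $|X_u|=n-1$ (Lemma~\ref{l11}). The overall argument is by induction on $n$, so I may assume the paper's main results for $T(X_u)$: every semilattice there has at most $2^{n-2}$ elements, and the only ones of that size are the sets $E_s$ ($s\in X_u$) of Definition~\ref{det} taken relative to $X_u$. Thus $|S^*|=|S^*_u|\le 2^{n-2}$, so $2^{n-1}\le|S|\le 2|S^*|\le 2^{n-1}$ forces equality everywhere; in particular $|S^*_u|=2^{|X_u|-1}$ and $|S|=2|S^*|$, whence $S^*_u=E_s$ for some $s\in X_u$. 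Since $tg=t$ for all $g\in S$, the definition of $g^*$ gives $tg^*=t$, so every element of $S^*_u$ fixes $t$; as the constant transformation with image $\{s\}$ belongs to $E_s$ and fixes $t$ only when $s=t$, we get $s=t$. Translating $S^*_u=E_t$ (relative to $X_u$) back into $T(X)$, and using that $ug^*=t$ for every $g\in S$, one reads off that $S^*=\{e_A:A\subseteq X-\{t,u\}\}$, the ``half'' of $E_t$ consisting of the $e_A$ with $u\notin A$.

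Next I would analyse the map $g\mapsto g^*$. Its fibres have at most two elements: it is the identity on $S-\mathcal{A}$ and injective on $\mathcal{A}$ by Lemma~\ref{l8}, where $\mathcal{A}=\{g\in S:xg=u\text{ for some }x\in X\}=\{g\in S:ug=u\}$. Because $|S|=2|S^*|$, every fibre has exactly two elements, one in $S-\mathcal{A}$ and one in $\mathcal{A}$; and since $g^*=g$ on $S-\mathcal{A}$, this forces $S-\mathcal{A}=S^*\subseteq S$ and $|\mathcal{A}|=2^{n-2}$. By the hypothesis of the lemma, each $g\in\mathcal{A}$ is an idempotent fixing $t$ and $u$ with singleton preimages off $\{t,u\}$, so $g$ is completely described by the partition of $X-\{t,u\}$ into its set $P_g$ of fixed points, the set it maps to $t$, and the set $R_g$ it maps to $u$. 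The crucial point is that $R_g=\emptyset$: $g$ commutes with $e_{X-\{t,u\}}\in S^*$, which fixes $X-\{t,u\}$ and sends $u$ to $t$, so if $z\in R_g$ then $z(g\cdot e_{X-\{t,u\}})=u\cdot e_{X-\{t,u\}}=t\ne u=zg=z(e_{X-\{t,u\}}\cdot g)$, contradicting commutativity (alternatively this follows from Corollary~\ref{ccen}). Therefore $g=e_{P_g\cup\{u\}}\in E_t$, whence $S=S^*\cup\mathcal{A}\subseteq E_t$, and $|S|=2^{n-1}=|E_t|$ (Proposition~\ref{p2}) gives $S=E_t$ --- the contradiction promised above.

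The main obstacle, as I see it, is extracting exact structural identities from inequalities that are known only to be equalities as a whole: converting $|S|=2|S^*|$ into $S-\mathcal{A}=S^*$ (so that the elements of $S^*$ really do sit inside $S$ and must commute with $\mathcal{A}$), and then the commutativity computation that kills $R_g$. It should also be stressed that the step $S^*_u=E_s$ uses the \emph{uniqueness} part of the inductive hypothesis, not merely the cardinality bound.
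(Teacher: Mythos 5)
Your proof is correct, but it takes a genuinely different route from the paper's. The paper proves Lemma~\ref{l13} directly, with no induction and no appeal to Section~\ref{sinc}: it defines an explicit map $\lam:S\to E_t$ (redirecting to $t$ the points of $X-\{u\}$ that $e$ sends to $u$, and leaving everything else alone), uses commutativity to show $\lam$ is one-to-one, and then exhibits a specific $e_A\in E_t$ (with $z\in A$ and $u\notin A$, where $zf=u$, $z\ne u$) that cannot lie in the image, whence $|S|<|E_t|=2^{n-1}$. You instead argue by contradiction through the $S^*$ construction and the inductive hypothesis for $T(X_u)$ --- including, as you rightly emphasize, its uniqueness clause --- to pin down $S^*_u=E'_t$, then squeeze the equality $|S|=2|S^*|$ into $S-\mathcal{A}=S^*\subseteq S$ and $|\mathcal{A}|=2^{n-2}$, and finally use commutation with $e_{X-\{t,u\}}\in S$ to kill the sets $R_g$ and conclude $S=E_t$, contradicting $|uf^{-1}|\ge2$. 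Each step of this checks out (including the identification $\mathcal{A}=\{g\in S: ug=u\}$ and the fibre count). The cost of your route is structural: Lemma~\ref{l13} ceases to be a standalone statement and must be proved simultaneously with Theorem~\ref{tmain} by induction on $n$ --- the lemma at level $n$ invokes the theorem at level $n-1$, while the theorem at level $n$ invokes the lemma at level $n$ in Case~1. That is not circular, but the simultaneous induction has to be set up explicitly for the argument to be legitimate, and the base cases checked; the paper's self-contained proof sidesteps all of this and is considerably shorter. What your version buys in exchange is sharper structural information: it shows that, under the first three hypotheses of the lemma, a semilattice of size $2^{n-1}$ would be forced to equal $E_t$ itself, rather than merely bounding $|S|$.
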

\begin{proof}
Define $\lam:S\to E_t$ by
\[
x(e\lam)=\left\{
\begin{array}{lll}
xe & \mbox{if $x=u$,}\\
xe & \mbox{if $x\ne u$ and $xe\ne u$,}\\
t & \mbox{if $x\ne u$ and $xe=u$,}
\end{array}
\right.
\]
where $e\in S$ and $x\in X$. Note that, indeed, $e\lam\in E_t$ since $|xe^{-1}|\leq1$ if $x\notin\{t,u\}$,
so $|x(e\lam)^{-1}|\geq1$ if $x\ne t$.
We claim that $\lam$ is one-to-one but not onto.

Let $g,h\in S$ be such that $g\lam=h\lam$ and let $x\in X$. If $x=u$, then
$xg=x(g\lam)=x(h\lam)=x\lam$. If $x\ne u$, $xg\ne u$, and $xh\ne u$, then again
$xg=x(g\lam)=x(h\lam)=x\lam$.

Let $x\ne u$. Suppose $xg=u$ or $xh=u$. We may assume that $xg=u$. Then $x(g\lam)=t$,
and so $x(h\lam)=x(g\lam)=t$. The latter implies that either $xh=t$ or $xh=u$. We claim that
$xh=t$ is impossible. Indeed, suppose to the contrary that $xh=t$. Then, since $gh=hg$,
\[
t=tg=(xh)g=(xg)h=uh=u(h\lam)=u(g\lam)=ug=(xg)g=x(gg)=xg=u,
\]
which is a contradiction since $t\ne u$. Thus $xh\ne t$, and so $xh=u$.
But then $xg=xh$.

Hence $xg=xh$ for every $x\in X$, and so $\lam$ is one-to-one. We know that there exists
$f\in S$ such that $|uf^{-1}|\geq2$. Thus there is $z\in X$ such that $z\ne u$ and $zf=u$.
Consider any $\ea\in E_t$ such that $z\in A$ and $u\notin A$.
Note that $z\ea=z$ and $u\ea=t$. We claim that $\ea\notin\ima(\lam)$.
Suppose to the contrary that $\ea=g\lam$ for some $g\in S$. Then there is no $x\in X$
such that $xg=u$. (Indeed, if such an $x$ existed, then we would have
$ug=(xg)g=x(gg)=xg=u$, and so $t=u\ea=u(g\lam)=ug=u$, which would contradict $u\ne t$.)
It then follows from the definition of $\lam$ that $g\lam=g$.
Hence $\ea=g\lam=g\in S$, and so $\ea f=f\ea$. But $z(f\ea)=u\ea=t$ and $z(\ea f)=zf=u$,
and we have obtained a contradiction since $u\ne t$. The claim has been proved,
and so $\lam$ is not onto.

Since $\lam:S\to E_t$ is one-to-one but not onto, we have $|S|<|E_t|=2^{n-1}$.
\end{proof}

We can now prove our main results.

\begin{theorem}\label{tmain}
Let $S$ be a semilattice in $T(X)$, where $X$ is a finite set with $n$ elements.
Suppose that $S\ne E_s$ for every $s\in X$. Then $|S|<2^{n-1}$.
\end{theorem}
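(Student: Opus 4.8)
The plan is to argue by induction on $n = |X|$, using the inductive construction of Section~\ref{sinc} as the engine. The base cases $n = 1, 2$ are trivial to check directly (for $n = 1$, $T(X)$ is itself trivial; for $n = 2$, one lists the at most two idempotents and verifies the claim). So assume $n \geq 3$ and that the theorem holds for all smaller ground sets. Let $S$ be a semilattice in $T(X)$ with $S \neq E_s$ for all $s \in X$; I want to show $|S| < 2^{n-1}$. By Proposition~\ref{p5}, fix $t, u \in X$ with $u \neq t$ such that $te = t$ and $ue \in \{u, t\}$ for every $e \in S$. Apply the construction of Section~\ref{sinc} to this data: we obtain a semilattice $S^*_u$ in $T(X_u)$ with $|X_u| = n - 1$, isomorphic to $S^*$, and satisfying $|S| \leq 2|S^*| = 2|S^*_u|$ by Lemmas~\ref{l9} and~\ref{l11}.

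Now split into two cases according to whether $S^*_u$ is one of the extremal semilattices in $T(X_u)$. \emph{Case 1: $S^*_u \neq E_s$ for every $s \in X_u$} (where here $E_s$ denotes the extremal semilattice in $T(X_u)$ from Definition~\ref{det} applied to the $(n-1)$-element set $X_u$). Then the inductive hypothesis gives $|S^*_u| < 2^{(n-1)-1} = 2^{n-2}$, hence $|S| \leq 2|S^*_u| < 2^{n-1}$, as desired. \emph{Case 2: $S^*_u = E_s$ for some $s \in X_u$.} Then $|S^*_u| = 2^{n-2}$, so the chain $|S| \leq 2|S^*_u| = 2^{n-1}$ only rules out strict inequality if it is in fact an equality throughout. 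So assume toward a contradiction that $|S| = 2^{n-1}$; then both inequalities $|S| \leq |S^*| + |A|$ and $|A^*| \leq |S^*|$ in the proof of Lemma~\ref{l9} must be equalities, forcing $|A| = |S^*| = 2^{n-2}$ and, crucially, $S = S^* \cup A$ with $S^* \cap A = \emptyset$ and $A^* = S^*$ — that is, every element of $S^*$ is hit by an element of $S$ on which some point maps to $u$. One then has to extract enough structural information about $S$ from the equality $S^*_u = E_s$: the elements of $E_s \subseteq T(X_u)$ have the property that each point of $X_u$ either is fixed or maps to $s$, so pulling back through the restriction map, every $g \in S$ sends each point of $X$ into $\{$ its own value, $s$, $u$, $t\}$ in a very restricted pattern. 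The point $s$ plays the role of "$t$" inside $X_u$; combined with the original condition that $t$ is fixed and $u$ maps to $\{u,t\}$ under every element of $S$, one should deduce that in fact $|xe^{-1}| \leq 1$ for all $e \in S$ and all $x \notin \{u, t\}$, while the equality $A^* = S^*$ with $A$ nonempty (it is, since $|A| = 2^{n-2} \geq 1$) produces an $f \in S$ with $|uf^{-1}| \geq 2$. Lemma~\ref{l13} then yields $|S| < 2^{n-1}$, contradicting the assumption $|S| = 2^{n-1}$.

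The main obstacle I anticipate is Case 2: making the transition from the abstract equality $S^*_u = E_s$ back to a concrete description of $S$ that is strong enough to feed Lemma~\ref{l13}. Two subtleties need care. First, $s$ could a priori equal $u$ — but $s \in X_u = X \setminus \{u\}$, so $s \neq u$; however $s$ might equal $t$, or not, and the argument should handle both (the roles of $t$ and $s$ as the "collapse point" in $X$ versus $X_u$ must be reconciled, and one may need to distinguish whether $t \in X_u$ plays the distinguished role). Second, one must verify carefully that the hypotheses of Lemma~\ref{l13} genuinely hold: the condition $|xe^{-1}| \le 1$ for $x \notin \{u,t\}$ has to be read off from the fact that $\gs|_{X_u} \in E_s$ is injective away from $s$, together with the definition of $\gs$ (which only modifies points mapping to $u$). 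A clean way to organize this is to observe that if $S^*_u = E_s$ then for each $g \in S$ and each $x \in X$ with $xg \notin \{u\}$ we have $x\gs = xg \in X_u$, and $\gs|_{X_u}$ being an element of $E_s$ forces $xg \in \{x, s\}$ whenever $x \in X_u$ and $xg \neq u$; a short case analysis on whether $x = u$, $x = t$, or $x \in X \setminus \{u,t\}$ then pins down the preimage sizes. Once that bookkeeping is done, the two cases close the induction.
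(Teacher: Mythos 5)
Your overall strategy is the paper's: induction on $n$ via the $S^*$ construction of Section~\ref{sinc}, with Lemma~\ref{l13} handling the degenerate situation. The paper splits on whether $|xe^{-1}|\leq 1$ for all $e\in S$ and $x\in X-\{u,t\}$, while you split on whether $S^*_u$ equals some $E'_s$; these are essentially dual organizations of the same dichotomy, and your Case~1 is verbatim the paper's Case~2. Your Case~2 is also salvageable, but as written it contains one step that is actually false. You claim that, in the extremal situation $|S|=2^{n-1}$, the forced nonemptiness of $A=\{g\in S: xg=u\mbox{ for some }x\in X\}$ ``produces an $f\in S$ with $|uf^{-1}|\geq 2$.'' It does not: an element $g\in A$ may satisfy $ug=u$ with $u$ as the \emph{unique} preimage of $u$, in which case $|ug^{-1}|=1$. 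So nonemptiness of $A$ alone does not deliver the second hypothesis of Lemma~\ref{l13}, and your contradiction in Case~2 does not close as stated.

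The repair is short and does not need the counting-equality analysis at all. In Case~2 one first pins down $s=t$ (every element of $S^*_u$ fixes $t$, but $E'_s$ contains the constant map onto $s$, so $s=t$); then, exactly as you sketch, $S^*_u=E'_t$ forces $xg\in\{x,t,u\}$ for all $x\in X_u$ and $g \in S$, whence $|xe^{-1}|\leq 1$ for all $x\notin\{u,t\}$. Now split once more: if some $f\in S$ has $|uf^{-1}|\geq 2$, Lemma~\ref{l13} gives $|S|<2^{n-1}$ directly. If no such $f$ exists, then no $x\neq u$ has $xg=u$ for any $g\in S$ (since $xg=u$ with $g$ idempotent forces $ug=u$ and hence $|ug^{-1}|\geq 2$), so every $g\in S$ satisfies $xg\in\{x,t\}$ for all $x$, i.e.\ $S\subseteq E_t$; the hypothesis $S\neq E_t$ then gives $|S|<|E_t|=2^{n-1}$. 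With that subcase added (and the $s=t$ point, which you flagged but left unresolved), your argument is complete and matches the paper's in substance.
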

\begin{proof}
We proceed by induction on $n$. The result is vacuously true for $n=1$.
Let $n\geq2$ and suppose the result is true for every semilattice in $T(Z)$ with $|Z|=n-1$.
By Proposition~\ref{p5}, there are $t,u\in X$
such that $u\ne t$ and for every $e\in S$, $te=t$ and $ue\in\{u,t\}$.
We consider two cases.
\vskip 1mm
\noindent{\bf Case 1.} $|xe^{-1}|\leq1$ for all $e\in S$ and all $x\in X-\{u,t\}$.
\vskip 1mm
Then, since $S\ne E_t$, there is $f\in S$ such that $|uf^{-1}|\geq2$,
and so $|S|<2^{n-1}$ by Lemma~\ref{l13}.
\vskip 1mm
\noindent{\bf Case 2.} $|yg^{-1}|\geq2$ for some $g\in S$ and some $y\in X-\{u,t\}$.
\vskip 1mm
Then there is $z\in X$ such that $z\ne y$ and $zg=y$. Note that $z\ne u$ (since $ug\in\{u,t\}$ and $y\ne u,t$).
Consider the semilattice
$S^*_u$ in $T(X_u)$ from Definition~\ref{d10}. For any $s\in X_u$, we now have
the semilattice $E_s$ in $T(X)$ and the semilattice $E_s$ in $T(X_u)$. To avoid ambiguity,
we will denote the latter by $E'_s$.

Consider $e=\gs|_{X_u}\in S^*_u$. By the definition of $\gs$ (see Definition~\ref{d6}),
$ze=z\gs=zg=y$, and so $e\notin E'_t$ (since $y\ne t$ and $z\ne y$).
Thus $S^*_u\ne E'_t$, and so $S^*_u\ne E'_s$ for every $s\in X_u$ (since $te=t$ for every $e\in S^*_u$).
Hence, by the inductive hypothesis, $|S^*_u|<2^{n-2}$. But $|S^*_u|=|S^*|$ (by Lemma~\ref{l11})
and $|S|\leq2|S^*|$ (by Lemma~\ref{l9}). Hence $|S|\leq2|S^*|=2|S^*_u|<2\cdot2^{n-2}=2^{n-1}$.
\end{proof}

The following corollary follows immediately from Theorem~\ref{tmain} and Proposition~\ref{p2}.

\begin{cor}\label{cmain}
Let $X$ be a finite set with $n$ elements. Then:
\begin{itemize}
  \item [\rm(1)] The maximum cardinality of a semilattice in $T(X)$ is $2^{n-1}$.
  \item [\rm(2)] There are exactly $n$ semilattices in $T(X)$ of cardinality $2^{n-1}$,
namely the semilattices $E_t$ ($t\in X$) from {\rm Definition~\ref{det}}.
  \item [\rm(3)] Each semilattice $E_t$ is isomorphic to the semilattice of idempotents of the symmetric inverse semigroup $I(X')$,
where $|X'|=n-1$.
\end{itemize}
\end{cor}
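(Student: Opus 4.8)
The plan is to prove the contrapositive-style dichotomy by induction on $n=|X|$: either $S$ equals some $E_s$, or $|S|<2^{n-1}$. The base case $n=1$ is vacuous since $T(X)$ is trivial and $E_s$ is the only semilattice. For the inductive step with $n\geq2$, I would first invoke Proposition~\ref{p5} to obtain $t,u\in X$ with $u\ne t$ such that $te=t$ and $ue\in\{u,t\}$ for every $e\in S$. This fixes a distinguished pair and lets us apply all the machinery of Section~\ref{sinc}. The argument then splits on whether $S$ ``behaves like'' an inverse semigroup away from $\{t,u\}$, i.e.\ whether $|xe^{-1}|\leq1$ for all $e\in S$ and all $x\in X-\{u,t\}$.

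In the first case, where every $e\in S$ restricts to an injection on $X-\{u,t\}$, the assumption $S\ne E_t$ forces some $f\in S$ to have $|uf^{-1}|\geq2$: indeed, if no such $f$ existed then every $e\in S$ would be one-to-one on $X-\{t\}$ with $te=t$, hence an idempotent lying in the inverse semigroup $I_t$, placing $S\subseteq E_t$ and thus $S=E_t$ by maximality of $E_t$ (Proposition~\ref{p2}(1)). Then Lemma~\ref{l13} applies directly and yields $|S|<2^{n-1}$. In the second case, where $|yg^{-1}|\geq2$ for some $g\in S$ and some $y\in X-\{u,t\}$, I would pass to the smaller set $X_u=X-\{u\}$ via the construction of Section~\ref{sinc}: form $S^*$ (Definition~\ref{d6}), then $S^*_u\subseteq T(X_u)$ (Definition~\ref{d10}). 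Picking $z\ne y$ with $zg=y$ and noting $z\ne u$ (since $ug\in\{u,t\}$ and $y\notin\{u,t\}$), one checks $z(\gs|_{X_u})=zg=y\ne z$ with $y\ne t$, so the relevant restriction witnesses that $S^*_u$ is not contained in the analogue $E'_t$ inside $T(X_u)$; since $t$ is still fixed by every element of $S^*_u$, it cannot equal $E'_s$ for any other $s$ either. The inductive hypothesis then gives $|S^*_u|<2^{n-2}$, and combining $|S^*_u|=|S^*|$ (Lemma~\ref{l11}) with $|S|\leq2|S^*|$ (Lemma~\ref{l9}) yields $|S|<2^{n-1}$.

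The corollary then drops out: part~(1) is immediate since $|E_t|=2^{n-1}$ (Proposition~\ref{p2}(2)) gives the bound is attained, and the theorem shows nothing larger or equal-but-different can occur; part~(2) says the only semilattices of size exactly $2^{n-1}$ are the $E_t$, which is the contrapositive of the theorem together with the fact that the $E_t$ are pairwise distinct (they have different fixed points); part~(3) is exactly Proposition~\ref{p2}(2).

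The main obstacle is Case~1, specifically the argument inside Lemma~\ref{l13} that the natural map $\lam:S\to E_t$ (which replaces any value $u$ taken outside the ``$u$-fibre seed'' by $t$) is injective. Injectivity relies delicately on commutativity: if $xg=u$ but $xh=t$ for some $x\ne u$, one derives a contradiction by computing $tg=(xh)g=(xg)h=uh$ and chasing this through $u(h\lam)=u(g\lam)=ug=u$, forcing $t=u$. Keeping track of which values can be $u$ versus $t$ under the various compositions, and verifying that $\lam$ genuinely lands in $E_t$ (using $|xe^{-1}|\leq1$ off $\{t,u\}$ to guarantee surjectivity of each $e\lam$ onto its image in the required shape), is the technically fussy part; everything else is bookkeeping with the constructions already established.
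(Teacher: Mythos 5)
Your proof follows the paper's route essentially step for step: the corollary is read off from Proposition~\ref{p2} together with the dichotomy of Theorem~\ref{tmain}, which you re-derive by the same induction (Proposition~\ref{p5} to get $t,u$; the case split on whether $|xe^{-1}|\leq1$ for all $x\in X-\{u,t\}$; Lemma~\ref{l13} in the first case; the $S^*$, $S^*_u$ construction with Lemmas~\ref{l9} and~\ref{l11} in the second). Parts (1)--(3) then follow exactly as you describe, including the observation that the $E_t$ are pairwise distinct.

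The one step that does not hold as written is in your Case~1: from $S\subseteq E_t$ you conclude $S=E_t$ ``by maximality of $E_t$.'' Maximality of $E_t$ means that no semilattice of $T(X)$ \emph{properly contains} $E_t$; it says nothing about proper subsemilattices of $E_t$, of which there are many (the paper notes at the end that $E_t$ has subsemilattices of every cardinality from $1$ to $2^{n-1}$). So the implication ``no $f\in S$ has $|uf^{-1}|\geq2$, hence $S\subseteq E_t$, hence $S=E_t$'' fails already for $S$ consisting of the single constant map onto $\{t\}$. The repair is immediate and in fact shortens that subcase: if no such $f$ exists, then every $e\in S$ satisfies $|xe^{-1}|\leq1$ for all $x\ne t$, which together with $te=t$ and idempotency forces $e\in E_t$; hence $S\subseteq E_t$, and since $S\ne E_t$ by hypothesis the containment is proper, giving $|S|<|E_t|=2^{n-1}$ directly without invoking Lemma~\ref{l13} at all. (The paper's own phrasing of Case~1 --- ``since $S\ne E_t$, there is $f\in S$ such that $|uf^{-1}|\geq2$'' --- is terse on exactly the same point and should be read with this dichotomy in mind.) With that correction your argument is complete and coincides with the paper's.
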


By Corollary~\ref{cmain}, if $S$ is a semilattice in $T(X)$ with $m=|S|$,
then $1\leq m\leq 2^{n-1}$. It is easy to see that the converse is also true: if $1\leq m\leq 2^{n-1}$,
then $m=|S|$ for some semilattice $S$ in $T(X)$. Indeed, fix $t\in X$. If $m=2^{n-1}$, then $m=|E_t|$.
Let $1<m\leq 2^{n-1}$ and suppose $m=|S|$ for some subsemilattice $S$ of $E_t$. Let $A\subseteq X_t$
be such that $\ea\in S$ and $|A|\geq|B|$ for every $B\subseteq X_t$. Then $S_1=S-\{\ea\}$ is a subsemilattice
of $E_t$ with $|S_1|=m-1$. The converse follows.

Of course, the subsemilattices of $E_t$, with the exception of $E_t$ itself, are not maximal.
We conclude the paper with the following problem.
\vskip 2mm
\noindent{\bf Problem.} Let $X$ be a finite set with $n$ elements.
Which numbers $m$, $1\leq m\leq 2^{n-1}$, can serve as the cardinalities
of maximal semilattices in $T(X)$?


\begin{thebibliography}{99}
\bibitem{ArKo03}
J. Ara\'ujo and J. Konieczny, Automorphism groups of centralizers
of idempotents, \emph{J. Algebra} {\bf 269} (2003), 227--239.

\bibitem{ArKo07} J. Ara\'ujo and J. Konieczny, Automorphisms of endomorphism monoids of relatively free bands,
\emph{Proc.\ Edinb.\ Math.\ Soc.} {\bf 50} (2007), 1--21.
\bibitem{Bi70}
A.P. Birjukov, Varieties of idempotent semigroups, {\it Algebra i
Logika\/} {\bf 9} (1970), 255--273. (Russian).
\bibitem{Bi67}
G. Birkhoff, ``Lattice Theory,'' Third edition, American Mathematical Society Colloquium Publications,
Vol.\ XXV, Providence, R.I., 1967.

\bibitem{Fe71}
C.F. Fennemore,
All varieties of bands. I, II, {\it Math.\ Nachr.\/} {\bf 48} (1971), 237--252;
ibid. {\bf 48} (1971), 253--262.
\bibitem{Ge70}
J.A. Gerhard,
The lattice of equational classes of idempotent semigroups,
{\it J. Algebra\/} {\bf 15} (1970) 195--224.
\bibitem{Ho95}
J.M. Howie, ``Fundamentals of Semigroup Theory,'' Oxford
University Press, New York, 1995.
\bibitem{Ko02} J. Konieczny, Semigroups of transformations commuting with idempotents,
\emph{Algebra Colloq.} {\bf 9} (2002), 121--134.

\bibitem{KuCr87}
M. Kunze and S. Crvenkovi\'c, Maximal subsemilattices of the full transformation semigroup,
\emph{Semigroup Forum} {\bf 35} (1987), 245--250.
\bibitem{KuCr91}
M. Kunze and S. Crvenkovi\'c, Maximal subsemilattices of the full transformation semigroup on a finite set,
\emph{Dissertationes Math.\ (Rozprawy Mat.)} {\bf 313} (1991), 31 pp.

\bibitem{higgins}
P.M. Higgins, ``Techniques of Semigroup Theory,'' Oxford University Press, New York, 1992.
\bibitem{Ni76}
J.W. Nichols, A class of maximal inverse subsemigroups of $T_X$,
\emph{Semigroup Forum} {\bf 13} (1976), 187--188.

\bibitem{schein1}
B.M. Schein, A symmetric semigroup of transformations is covered by its inverse subsemigroups,
\emph{Acta Math.\ Acad.\ Sci.\ Hungar.} {\bf 22} (1971/72), 163--171. (Russian).
\bibitem{schein2}
B.M. Schein, Research Problems,
\emph{Semigroup Forum} {\bf 1} (1970), 91--92.
\end{thebibliography}
\end{document}